\documentclass[english]{amsart}%
\usepackage{amsfonts}
\usepackage{amsmath,color}
\usepackage{amssymb}
\usepackage{amsthm}
\usepackage{amscd}
\usepackage{babel}
\usepackage[latin1]{inputenc}
\usepackage{graphicx}
\usepackage{amsmath}%
\setcounter{MaxMatrixCols}{30}
\providecommand{\U}[1]{\protect\rule{.1in}{.1in}}
\newtheorem{teor}{Theorem}

\newtheorem{prop}{Proposition}

\newtheorem{lem}{Lemma}
\newtheorem{exa}{Example}
\theoremstyle{definition}
\newtheorem{defi}{Definition}
\newtheorem*{rem}{Remark}

\renewcommand{\subjclassname}{AMS \textup{2010} Mathematics Subject
Classification\ }

\begin{document}

\author{J.M. Grau Ribas}
\address{Departamento de Matemáticas, Universidad de Oviedo\\
Avda. Calvo Sotelo s/n, 33007 Oviedo, Spain}
\email{grau@uniovi.es}

\title{A New Proof and Extension of the Odds-Theorem}

\begin{abstract}
There are $n$ independent Bernoulli random variables $I_{k}$ with parameters
$p_{k}$ that are observed sequentially. We consider a generalization of the
Last-Success-Problem considering $w_{k}$ positive payments if the player
successfully predicts that the last "1" occurs in the variable $I_{k}$. We
establish the optimal strategy and the expected profit in similar terms to
the Odds-Theorem. The proof provided here is an alternative proof to
the one Bruss provides in his Odds-Theorem (case $w_{i}=1$) that is even
simpler  and more elementary than his proof.
\end{abstract}

\maketitle
\keywords{Keywords: Secretary problem; Last-Success-Problem; Odds-Theorem;
Threshold strategy; Stopping problem}

\subjclassname{60G40, 62L15}

\section{Introduction}

The Last-Success-Problem is the problem of maximizing the probability of
stopping on the last success in a finite sequence of Bernoulli trials. The
framework is as follows. There are $n$ Bernoulli random variables which are
observed sequentially. The problem is to find a stopping rule to maximize
the probability of stopping on the last $"1"$. We restrict ourselves here to
the case in which the random variables are independent. This problem has
been studied by Hill and Krengel \cite{1992} and Hsiau and Yang \cite{2000}
for the case in which the random variables are independent and was simply
and elegantly solved by T.F. Bruss in \cite{BR1} with the following famous
result.

\begin{teor}
\label{odds} (Odds-Theorem, F.T. Bruss 2000). Let $I_{1},I_{2},...,I_{n}$ be
$n$ independent Bernoulli random variables with known $n$. We denote by ($%
i=1,...,n$) $p_{i}$, the parameter of $I_{i}$; i.e. ($p_{i}=P(I_{i}=1)$).
Let $q_{i}=1-p_{i}$ and $r_{i}=p_{i}/q_{i}$. We define the index

\begin{equation*}
\mathbf{s}=
\begin{cases}
\max\{1\leq k\leq n: \sum_{j=k}^n r_j \geq 1\}, & \text{if $\sum_{i=1}^n
r_i\geq 1$ }; \\
1, & \text{ otherwise }%
\end{cases}%
\end{equation*}

To maximize the probability of stopping on the last $"1"$ of the sequence,
it is optimal to stop on the first $"1"$ we encounter among the variables $%
I_{\mathbf{s}},I_{\mathbf{s}+1},...,I_{n}$.

The optimal win probability is given by

\begin{equation*}
\mathcal{V}(p_1,...,p_n):=\displaystyle{\ \left( \prod_{j=\mathbf{s} }^{n}q_j \right) } \displaystyle{%
\left(\sum_{i=\mathbf{s} }^{n} r_i \right)} 
\end{equation*}
\end{teor}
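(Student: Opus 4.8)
The plan is to combine two ingredients: an explicit formula for the win probability of each \emph{threshold rule}, and a dynamic-programming argument showing that an optimal rule must be of threshold type. Write $g_k=\prod_{j=k+1}^{n}q_j$ for the probability that positions $k+1,\dots,n$ all equal $0$; this is precisely the probability of winning if one stops at a success located at position $k$. Let $V_k$ denote the maximal win probability available to a player who has not yet stopped and is about to inspect $I_k,\dots,I_n$. Since the variables are independent and the payoff of stopping at $k$ depends only on $I_{k+1},\dots,I_n$, the principle of optimality gives the recursion
\begin{equation*}
V_k=p_k\max\bigl(g_k,\,V_{k+1}\bigr)+q_kV_{k+1},\qquad V_{n+1}=0,
\end{equation*}
because on a success one compares the stop value $g_k$ with the continuation value $V_{k+1}$, while on a failure one is forced to continue.

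First I would compute the win probability $W(t)$ of the threshold rule ``stop at the first success among $I_t,\dots,I_n$''. Summing over the position $k\ge t$ of that first success and using that everything after it must vanish,
\begin{equation*}
W(t)=\sum_{k=t}^{n}\Bigl(\prod_{j=t}^{k-1}q_j\Bigr)p_k\Bigl(\prod_{j=k+1}^{n}q_j\Bigr)=\Bigl(\prod_{j=t}^{n}q_j\Bigr)\sum_{k=t}^{n}r_k,
\end{equation*}
which already matches the claimed value $\mathcal{V}$ when $t=\mathbf{s}$. The same collapse yields the one-step relation $W(t)=p_tg_t+q_tW(t+1)$, and since $W(t+1)=g_t\sum_{k=t+1}^{n}r_k$, a short subtraction gives the decisive identity
\begin{equation*}
W(t)-W(t+1)=p_t\,g_t\Bigl(1-\sum_{k=t+1}^{n}r_k\Bigr).
\end{equation*}

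The heart of the argument is then a backward induction identifying $V_k$ with a threshold value. For $k\ge\mathbf{s}$ one has $\sum_{j=k+1}^{n}r_j\le 1$, hence $g_k\ge W(k+1)$, so I would show inductively that stopping on the first success is optimal and $V_k=W(k)$; the induction uses exactly the recursion $W(k)=p_kg_k+q_kW(k+1)$. For $k<\mathbf{s}$ one has $\sum_{j=\mathbf{s}}^{n}r_j\ge 1$, whence $g_k\le\prod_{j=\mathbf{s}}^{n}q_j\le W(\mathbf{s})=V_{k+1}$, so continuing is optimal and $V_k=V_{k+1}=W(\mathbf{s})$. Together these give $V_1=W(\mathbf{s})$ with optimal policy ``pass until $\mathbf{s}$, then stop on the first success'', which is the assertion; the displayed difference confirms that $W$ increases up to $\mathbf{s}$ and decreases afterwards, so $\mathbf{s}$ is genuinely the maximizer. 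The main obstacle is the structural claim that no history-dependent (or randomized) rule can beat a threshold rule: this is precisely what the optimality recursion for $V_k$ encodes, so the real work is verifying the monotonicity that makes the one-stage-look-ahead stopping region $\{k:g_k\ge V_{k+1}\}$ an upper interval $\{k\ge\mathbf{s}\}$; the boundary case $\sum_{i=1}^{n}r_i<1$ (where the region is all of $\{1,\dots,n\}$ and $\mathbf{s}=1$) is then immediate.
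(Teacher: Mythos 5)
Your proposal is correct and follows essentially the same route as the paper: your value recursion for $V_k$ is the paper's $\mathbb{E}_{\mathtt{Keep}}$, your threshold-rule value $W(t)$ is the paper's $\overline{\mathbb{E}}_{\mathtt{Keep}}(t-1)$ specialized to $w_i\equiv 1$ (with the same algebraic collapse to $\bigl(\prod_{j\geq t}q_j\bigr)\sum_{k\geq t}r_k$), and your backward induction identifying $V_k$ with $W(k)$ for $k\geq\mathbf{s}$ is the content of Lemma~\ref{lemlim} together with the computation showing the threshold equals $\mathbf{s}$. The only difference is that you verify the single-stopping-island structure directly from the odds condition $\sum_{j>k}r_j\lessgtr 1$ rather than through the general weighted monotonicity framework of Section~2, which is a legitimate specialization to $w_i=1$.
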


This theorem was extended by T. Ferguson (see \cite{fergu}) in several ways. First, considering an infinite number of
Bernoulli variables. Second, the payoff for not stopping is allowed to be
different from the payoff for stopping on a success that is not the last
success. Third, the Bernoulli variables are allowed to be dependent.

In this paper, we present a generalization of the Last-Success-Problem,
considering a positive payoff, $w_{k}$, if the player stops on the last
success and this occurs at the $k$-th event. We establish the optimal
strategy and the expected profit in similar terms to the Odds-Theorem. The
proof we provide constitutes an alternative proof to the one
provided by Bruss that is even simpler  and more elementary.

\section{Threshold strategies}

In this section, we shall show that, under certain conditions, the optimal
strategy is a threshold strategy. Dynamic programming provides the
probability of winning and the optimal strategy in a simple way. In what
follows, we shall take into account the following definitions.

\begin{defi}
\label{defi1}  Let us define the following functions.

\begin{itemize}
\item $\mathbb{E}_{{\scriptsize \mathtt{Stop}}}(k)$ is the expected profit
if we stop at the $k$-th event with $I_{k}=1$
\begin{equation*}
\mathbb{E}_{{\scriptsize \mathtt{Stop}}}(k):=w_{k}\prod_{i=k+1}^{n}(1-p_{i})
\end{equation*}%

\item $\mathbb{E}_{\mathtt{Keep}}(k)$ is the expected profit after observing
the $r$-th event and continuing (not stopping) in order to adopt the optimal
strategy later on. The dynamic program that defines it by recurrence is:
\begin{equation*}
\mathbb{E}_{\mathtt{Keep}}(n)=0
\end{equation*}%
\begin{equation*}
\mathbb{E}_{\mathtt{Keep}}(k)=p_{k+1}\cdot \max \left\{ \mathbb{E}_{%
{\scriptsize {\mathtt{Stop}}}}(k+1),\mathbb{E}_{\mathtt{Keep}}(k+1)\right\}
+(1-p_{k})\cdot \mathbb{E}_{\mathtt{Keep}}(k+1)
\end{equation*}
\end{itemize}
\end{defi}

\begin{prop}
With the above definitions, it is obvious that the following strategy is
optimal:

$\diamond $ \textsf{Stop if $I_{k}=1$ and $\mathbb{E}_{{\scriptsize {\mathtt{%
Stop}}}}(k)> \mathbb{E}_{{\scriptsize {\mathtt{Keep}}}}(k)$ and continue
{ }   otherwise}.

In addition, using this strategy, the expected profit is $\mathbb{E}_\mathtt{%
Keep}(0).$
\end{prop}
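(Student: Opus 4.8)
The plan is to recognize that this Proposition is essentially a direct consequence of the Bellman optimality principle applied to the dynamic program defined in Definition~\ref{defi1}. The statement has two parts: first, that the described threshold-type stopping rule is optimal, and second, that the expected profit under this rule equals $\mathbb{E}_{\mathtt{Keep}}(0)$. My strategy would be to prove both parts simultaneously by backward induction on the event index $k$, since the recurrence for $\mathbb{E}_{\mathtt{Keep}}$ is defined backward from $n$.

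First I would set up the induction carefully. For each $k$, I want to claim that $\max\{\mathbb{E}_{\mathtt{Stop}}(k),\mathbb{E}_{\mathtt{Keep}}(k)\}$ is precisely the optimal expected profit obtainable given that we have reached the $k$-th event and observed $I_k=1$ (so that stopping is a legal action), and that $\mathbb{E}_{\mathtt{Keep}}(k)$ is the optimal expected profit available from the continuation starting just after event $k$. The base case $k=n$ is immediate: once at the last event, continuing yields nothing, so $\mathbb{E}_{\mathtt{Keep}}(n)=0$, and if $I_n=1$ the only sensible action is to stop, collecting $\mathbb{E}_{\mathtt{Stop}}(n)=w_n$. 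For the inductive step I would unpack the recurrence: conditioning on whether $I_{k+1}=1$ (probability $p_{k+1}$, in which case the player compares stopping value $\mathbb{E}_{\mathtt{Stop}}(k+1)$ against continuing value $\mathbb{E}_{\mathtt{Keep}}(k+1)$ and takes the max) or $I_{k+1}=0$ (probability $1-p_{k+1}$, in which case stopping is unavailable and the only option carries value $\mathbb{E}_{\mathtt{Keep}}(k+1)$), the recurrence exactly reproduces the expected value of acting optimally from event $k+1$ onward. This identifies $\mathbb{E}_{\mathtt{Keep}}(k)$ with the value of the optimal continuation.

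Given that identification, the optimality of the stated rule follows by the principle of optimality: upon observing $I_k=1$, the player faces a binary choice between the immediate reward $\mathbb{E}_{\mathtt{Stop}}(k)$ and the optimal continuation value $\mathbb{E}_{\mathtt{Keep}}(k)$, so stopping is optimal precisely when $\mathbb{E}_{\mathtt{Stop}}(k) > \mathbb{E}_{\mathtt{Keep}}(k)$ (with ties broken by continuing, as the statement does, which loses nothing since the values are equal). Running the induction down to $k=0$ then gives that the total optimal expected profit, computed before any observation, is $\mathbb{E}_{\mathtt{Keep}}(0)$, establishing the second assertion.

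I expect the main obstacle to be not any hard computation but rather the bookkeeping needed to make the dynamic-programming argument rigorous: one must be precise about the information available at each stage (what the player has observed, and which actions are admissible), and one must verify that the set of admissible strategies is closed under the operation of following the greedy one-step-lookahead rule, so that the Bellman recursion genuinely computes a supremum over all stopping rules and not merely over threshold rules. The author's phrasing ``it is obvious'' suggests they intend to lean on the standard finite-horizon optimal-stopping framework, where this closure and the validity of backward induction are classical; accordingly I would keep the proof short, citing the principle of optimality and verifying the recurrence matches the conditional expectation, rather than re-deriving the foundations of dynamic programming.
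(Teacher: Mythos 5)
Your proposal is correct and is exactly the standard finite-horizon backward-induction (Bellman) argument; the paper itself offers no proof at all, simply declaring the result obvious, so your write-up supplies precisely the justification the author is implicitly relying on. The only point worth noting is that the paper's displayed recurrence contains a typo, $(1-p_{k})$ where $(1-p_{k+1})$ is meant, and your reading of the recurrence (conditioning on $I_{k+1}$) silently corrects this, which is the right interpretation.
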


\begin{defi}
We denote by the stopping set the set of indices in which the decision to
stop is optimal if the corresponding event is successful. That is:
\begin{equation*}
\Upsilon _{n}:=\{k:\mathbb{E}_{{\scriptsize {\mathtt{Stop}}}}(k)> \mathbb{%
E}_{{\scriptsize {\mathtt{Keep}}}}(k)\}
\end{equation*}
\end{defi}

\begin{exa}
Let us consider 9 random Bernoulli variables with the following parameters, $%
p_{i}$, and payoffs, $w_{i}$:
\begin{equation*}
\{{p_{1}}=\frac{1}{6},{p_{2}}=\frac{1}{10},{p_{3}}=\frac{1}{12},{p_{4}}=%
\frac{1}{3},{p_{5}}=\frac{1}{12},{p_{6}}=\frac{1}{10},{p_{7}}=\frac{1}{5},{%
p_{8}}=\frac{1}{10},{p_{9}}=\frac{1}{12}\}
\end{equation*}

\begin{equation*}
\{ {w_1} = 7,{w_2} = 4,{w_3} = 9,  {w_4} = 10,{w_5} = 6,{w_6} = 3,  {w_7} =
9,{w_8} = 9,{w_9} = 1\}
\end{equation*}

The corresponding dynamic program returns:
\begin{equation*}
\mathsf{Expected Profit}=\mathbb{E}_{{\scriptsize {\mathtt{Keep}}}}(0)=\frac{%
6721}{2000}
\end{equation*}%
and the stopping set

\begin{equation*}
\mathsf{Stopping Set} = \{4,5,7,8,9\}
\end{equation*}
\end{exa}

\begin{defi}
If the stopping set has a single stopping island, $\Upsilon _{n}=\{k:k\geq
\mathbf{k}\}$, we shall say that the optimal strategy is a\emph{\ threshold
strategy} and, in this case, $\mathbf{k}:=\min \Upsilon _{n}$ is the \emph{\
optimal threshold}. In the terminology of Chow, Robbins and Siegmund (see
\cite{mono}), we also state that the problem is \emph{a monotone problem},
which is not the case in the aforementioned example.
\end{defi}

\begin{rem}
Note that, for the optimal threshold, we have that
\begin{equation*}
\mathbf{k}=\min \{k:\mathbb{E}_{{\scriptsize {\mathtt{Stop}}}}(k)>
\mathbb{E}_{{\scriptsize {\mathtt{Keep}}}}(k)\}=1+\max \{k:\mathbb{E}_{%
{\scriptsize {\mathtt{Stop}}}}(k)\leq\mathbb{E}_{{\scriptsize {\mathtt{Keep}}}%
}(k)\}
\end{equation*}
\end{rem}

The following two easy results characterize monotone problems.

\begin{prop}
\label{guay} The problem is monotone if and only if for all $0<k<n$
\begin{equation*}
\mathbb{E}_{{\scriptsize {\mathtt{Stop}}}}(k)>\mathbb{E}_{{\scriptsize {%
\mathtt{Keep}}}}(k) \Rightarrow \mathbb{E}_{{\scriptsize {\mathtt{Stop}}}%
}(k+1) > \mathbb{E}_{{\scriptsize {\mathtt{Keep}}}}(k+1)
\end{equation*}
\end{prop}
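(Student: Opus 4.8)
The plan is to recognize that, once we unwind the definitions, monotonicity is a purely order-theoretic statement about the stopping set $\Upsilon_n$. By definition the problem is monotone precisely when $\Upsilon_n = \{k : k \geq \mathbf{k}\}$ with $\mathbf{k} = \min \Upsilon_n$, that is, precisely when $\Upsilon_n$ is an \emph{up-set} (upward closed) in the finite chain $\{1,\dots,n\}$. Since $\Upsilon_n = \{k : \mathbb{E}_{\mathtt{Stop}}(k) > \mathbb{E}_{\mathtt{Keep}}(k)\}$, the displayed implication is exactly the ``one step'' version of upward closure. So the whole proposition reduces to the elementary fact that a subset of a finite chain is upward closed if and only if it is closed under the successor map $k \mapsto k+1$, together with one small probabilistic observation ensuring the set is nonempty.

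For the forward direction I would argue directly: if the problem is monotone then $\Upsilon_n = \{k : k \geq \mathbf{k}\}$, so any $k$ with $0 < k < n$ satisfying $\mathbb{E}_{\mathtt{Stop}}(k) > \mathbb{E}_{\mathtt{Keep}}(k)$ belongs to $\Upsilon_n$, whence $k \geq \mathbf{k}$, hence $k+1 \geq \mathbf{k}$, hence $k+1 \in \Upsilon_n$, which is the claimed implication $\mathbb{E}_{\mathtt{Stop}}(k+1) > \mathbb{E}_{\mathtt{Keep}}(k+1)$.

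For the converse I would first record that the stopping set is never empty: since $\mathbb{E}_{\mathtt{Keep}}(n) = 0$ while $\mathbb{E}_{\mathtt{Stop}}(n) = w_n > 0$ (the payoff $w_n$ being positive and the product over the empty index set equal to $1$), we always have $n \in \Upsilon_n$. Consequently $\mathbf{k} := \min \Upsilon_n$ is well defined and $\Upsilon_n \subseteq \{k : k \geq \mathbf{k}\}$ by minimality. For the reverse inclusion I would run a finite induction starting from $\mathbf{k} \in \Upsilon_n$: whenever $k \in \Upsilon_n$ with $k < n$, the hypothesised implication forces $k+1 \in \Upsilon_n$, and iterating up to $n$ gives $\{k : \mathbf{k} \leq k \leq n\} \subseteq \Upsilon_n$. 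Combining the two inclusions yields $\Upsilon_n = \{k : k \geq \mathbf{k}\}$, so the optimal strategy is a threshold strategy and the problem is monotone.

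The single place requiring care — and the only point where probabilistic content enters rather than pure combinatorics — is the non-emptiness of $\Upsilon_n$: without the observation that $n \in \Upsilon_n$, the minimum $\mathbf{k}$ might fail to exist and the equivalence would collapse into a vacuous edge case. Everything else is the tautological equivalence between successor-closure and upward closure on a finite chain, so I expect no real obstacle beyond stating this non-emptiness cleanly.
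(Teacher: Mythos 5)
Your proof is correct, and since the paper states Proposition \ref{guay} without proof (as one of ``two easy results''), your argument --- the equivalence of upward closure with successor-closure on a finite chain, plus the observation that $n\in\Upsilon_n$ because $\mathbb{E}_{\mathtt{Stop}}(n)=w_n>0=\mathbb{E}_{\mathtt{Keep}}(n)$ --- is exactly the elementary argument the author intends. The non-emptiness point you isolate is indeed the one substantive detail the paper glosses over.
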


\begin{prop}
\label{guay1} The problem is monotone if and only if for all $0<k\leq n$
\begin{equation*}
\mathbb{E}_{{\scriptsize {\mathtt{Stop}}}}(k)-\mathbb{E}_{{\scriptsize {%
\mathtt{Keep}}}}(k)
\end{equation*}%
change sign at the most once.
\end{prop}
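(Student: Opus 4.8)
The plan is to derive Proposition \ref{guay1} from Proposition \ref{guay} by means of a single boundary computation, so that almost no new work is required. Throughout, write $D(k):=\mathbb{E}_{\mathtt{Stop}}(k)-\mathbb{E}_{\mathtt{Keep}}(k)$, so that $\Upsilon_n=\{k:D(k)>0\}$ and, by Proposition \ref{guay}, the problem is monotone if and only if the set $\{k:D(k)>0\}$ is upward closed, i.e. $D(k)>0\Rightarrow D(k+1)>0$ for all $0<k<n$. The goal therefore reduces to showing that this upward-closure condition is equivalent to the assertion that the sequence $D(1),\dots,D(n)$ changes sign at most once.

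First I would record the load-bearing fact, namely the terminal value
\begin{equation*}
D(n)=\mathbb{E}_{\mathtt{Stop}}(n)-\mathbb{E}_{\mathtt{Keep}}(n)=w_n\prod_{i=n+1}^{n}(1-p_i)-0=w_n>0,
\end{equation*}
which holds because the payoffs are positive and the product is empty. This forces the last entry of the sequence to be strictly positive and thereby fixes the orientation of the one admissible sign change.

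With this anchor in place the equivalence is short. If $\{k:D(k)>0\}$ is upward closed then, being nonempty (it contains $n$), it is a terminal segment $\{k\ge\mathbf{k}\}$; hence $D\le 0$ on the initial block $k<\mathbf{k}$ and $D>0$ on $k\ge\mathbf{k}$, so $D$ changes sign at most once. Conversely, if $D$ changes sign at most once, then since $D(n)>0$ the strictly positive values must occupy a terminal block: a passage from $D(k)>0$ to $D(k+1)\le 0$ would have to be undone later to accommodate $D(n)>0$, producing a second sign change. Thus $\{k:D(k)>0\}$ is upward closed, and monotonicity follows from Proposition \ref{guay}.

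The hard part will not be the logic but making the phrase ``changes sign at most once'' precise at indices where $D(k)=0$. I would avoid a three-valued sign function and instead work throughout with the two sets $\{k:D(k)>0\}$ and $\{k:D(k)\le 0\}$, reading a single sign change as a single transition from $\{D\le 0\}$ to $\{D>0\}$ with no transition back. The identity $D(n)>0$ guarantees that the permitted transition always occurs in the orientation compatible with a terminal stopping island, so no separate discussion of the boundary cases is needed.
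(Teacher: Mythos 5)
Your proof is correct; the paper states this proposition without proof (labelling it an ``easy result''), and your argument supplies exactly the natural missing justification: the anchor $D(n)=w_n>0$ forces the unique admissible sign change to be from $\{D\le 0\}$ to $\{D>0\}$, making ``at most one sign change'' equivalent to the stopping set being a terminal segment, which is Proposition \ref{guay}. Your explicit choice of the two-set convention for handling indices with $D(k)=0$ is the right one (a three-valued sign function would wrongly count a sequence like $+,0,+$ as having no sign change), so there is nothing to object to.
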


With the following result, we present a sufficient condition for the problem
to be monotone. In particular, when the payment function, $w_{k}$, is
non-decreasing, the problem is monotone.

\begin{prop}
\label{umbral} If $w_{k+1}\geq (1-p_{k+1})w_{k}$ for all $k\in \{1,...,n-1\}$%
, then the problem is monotone.
\end{prop}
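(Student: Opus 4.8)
The plan is to verify the criterion of Proposition \ref{guay}: under the hypothesis $w_{k+1}\geq (1-p_{k+1})w_{k}$, I want to show that for every $0<k<n$ the implication $\mathbb{E}_{{\scriptsize {\mathtt{Stop}}}}(k)>\mathbb{E}_{{\scriptsize {\mathtt{Keep}}}}(k)\Rightarrow \mathbb{E}_{{\scriptsize {\mathtt{Stop}}}}(k+1)>\mathbb{E}_{{\scriptsize {\mathtt{Keep}}}}(k+1)$ holds. Writing $P:=\prod_{i=k+2}^{n}(1-p_{i})\geq 0$ for the common tail product, the two stopping payoffs are linked by the single factorisation $\mathbb{E}_{{\scriptsize {\mathtt{Stop}}}}(k)=w_{k}(1-p_{k+1})\,P$ and $\mathbb{E}_{{\scriptsize {\mathtt{Stop}}}}(k+1)=w_{k+1}\,P$, so the hypothesis is precisely what is needed to compare them once $P$ is factored out.

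The one auxiliary fact I would need is that $\mathbb{E}_{{\scriptsize {\mathtt{Keep}}}}$ is non-increasing in $k$, i.e.\ $\mathbb{E}_{{\scriptsize {\mathtt{Keep}}}}(k)\geq \mathbb{E}_{{\scriptsize {\mathtt{Keep}}}}(k+1)$. This is immediate from the defining recurrence: bounding the maximum from below by $\mathbb{E}_{{\scriptsize {\mathtt{Keep}}}}(k+1)$ and using that the coefficients sum to one gives $\mathbb{E}_{{\scriptsize {\mathtt{Keep}}}}(k)\geq \mathbb{E}_{{\scriptsize {\mathtt{Keep}}}}(k+1)$.

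With this in hand the argument is a single chain of inequalities. Assuming $\mathbb{E}_{{\scriptsize {\mathtt{Stop}}}}(k)>\mathbb{E}_{{\scriptsize {\mathtt{Keep}}}}(k)$, I would write
\[
\mathbb{E}_{{\scriptsize {\mathtt{Keep}}}}(k+1)\leq \mathbb{E}_{{\scriptsize {\mathtt{Keep}}}}(k) < \mathbb{E}_{{\scriptsize {\mathtt{Stop}}}}(k) = (1-p_{k+1})w_{k}\,P \leq w_{k+1}\,P = \mathbb{E}_{{\scriptsize {\mathtt{Stop}}}}(k+1),
\]
where the first inequality is the monotonicity of $\mathbb{E}_{{\scriptsize {\mathtt{Keep}}}}$ just noted, the strict inequality is the hypothesis of the implication, the central equality is the factorisation of the tail product, and the last inequality is the standing assumption $w_{k+1}\geq (1-p_{k+1})w_{k}$ multiplied by the nonnegative factor $P$. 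Reading off the extremes yields $\mathbb{E}_{{\scriptsize {\mathtt{Stop}}}}(k+1)>\mathbb{E}_{{\scriptsize {\mathtt{Keep}}}}(k+1)$, which is exactly the implication required, so Proposition \ref{guay} gives monotonicity. The closing remark of the statement then follows at once, since a non-decreasing $w_{k}$ satisfies $w_{k+1}\geq w_{k}\geq (1-p_{k+1})w_{k}$.

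I do not expect a serious obstacle: the whole proof rests on the observation that $P\geq 0$ survives as a common factor, reducing the comparison of the two Stop-values to the hypothesized comparison of $w_{k+1}$ and $(1-p_{k+1})w_{k}$. The only point deserving care is the propagation of strictness through the chain; it enters only once, in the middle, while the flanking relations are an equality and inequalities obtained by multiplying by the nonnegative $P$, so the strict inequality reaches both ends and no positivity assumption on $P$ is required.
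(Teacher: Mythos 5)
Your proposal is correct and follows essentially the same route as the paper: establish that $\mathbb{E}_{{\scriptsize{\mathtt{Keep}}}}$ is non-increasing from the recurrence and that $\mathbb{E}_{{\scriptsize{\mathtt{Stop}}}}$ is non-decreasing from the hypothesis, then chain the inequalities to verify the criterion of Proposition \ref{guay}. If anything, your version is marginally more careful than the paper's, since you multiply by the common tail product $P$ rather than dividing by $\mathbb{E}_{{\scriptsize{\mathtt{Stop}}}}(k)$, and you track the strictness of the inequality explicitly.
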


\begin{proof}
If we take into account the associated dynamic program, we see that $\mathbb{%
E}_{{\scriptsize {\mathtt{Keep}}}}(k)$ is non-increasing
\begin{equation*}
\mathbb{E}_{\mathtt{Keep}}(k)=p_{k+1} \max\left \{ \mathbb{E}_{{\scriptsize {%
\mathtt{Stop}}}}(k+1),\mathbb{E}_{\mathtt{Keep}}(k+1)\right \} + (1-p_k)
\mathbb{E}_{\mathtt{Keep}}(k+1)\geq \mathbb{E}_{\mathtt{Keep}}(k+1)
\end{equation*}

On the other hand, $\mathbb{E}_{{\scriptsize {\mathtt{Stop}}}}$ is
non-decreasing since
\begin{equation*}
\frac{\mathbb{E}_{{\scriptsize {\mathtt{Stop}}}}(k+1)}{\mathbb{E}_{%
{\scriptsize {\mathtt{Stop}}}}(k)}=\frac{w_{k+1}\prod_{i=k+2}^{n}(1-p_{i})}{%
w_{k}\prod_{i=k+1}^{n}(1-p_{i})}=\frac{w_{k+1}}{w_{k}(1-p_{k+1})}\geq 1
\end{equation*}

As a consequence, given that $\mathbb{E}_{{\scriptsize {\mathtt{Stop}}}}$ is
non-decreasing and $\mathbb{E}_{{\scriptsize {\mathtt{Keep}}}}$ is
non-increasing,
\begin{equation*}
\mathbb{E}_{{\scriptsize {\mathtt{Stop}}}}(k)\geq \mathbb{E}_{{\scriptsize {%
\mathtt{Keep}}}}(k)\Rightarrow \mathbb{E}_{{\scriptsize {\mathtt{Stop}}}%
}(k+1)\geq \mathbb{E}_{{\scriptsize {\mathtt{Keep}}}}(k+1)
\end{equation*}%
and we are able to use Proposition \ref{guay}.
\end{proof}

With Proposition \ref{guay}, it became evident that for the problem to be
monotone, it is sufficient for $\mathbb{E}_{{\scriptsize \mathtt{Stop}}}(r)$
to be non-decreasing. However, this is not a necessary condition. Actually,
the problem is monotone if and only if the difference $\mathbb{E}_{%
{\scriptsize {\mathtt{Stop}}}}(k)-\mathbb{E}_{{\scriptsize {\mathtt{Keep}}}%
}(k)$ presents one change of sign at the most. However, the verification of
this   statement  presents difficulties as the dynamic program does
not allow us to know an explicit expression of $\mathbb{E}_{{\scriptsize
\mathtt{Keep}}}(r)$. We shall see how to overcome this difficulty below.

\begin{defi}
Let us denote by $\overline{\mathbb{E}}_{\mathtt{Keep}}(k)$ the expected
profit after observing the $k$-th event and continuing in order to stop on
the next success to be found.
\begin{equation*}
\overline{\mathbb{E}}_{\mathtt{Keep}}(k):=\sum_{i=k+1}^{n}\left(
\prod_{j=k+1}^{i-1}(1-p_{j})\right) \cdot p_{i}\cdot \mathbb{E}_{%
{\scriptsize {\mathtt{Stop}}}}(i)
\end{equation*}%
In other words, $\overline{\mathbb{E}}_{\mathtt{Keep}}(k)$ is the expected
profit using the strategy of stopping on the first success after the $k$-th
event.
\end{defi}

It is clear from the definition itself that $\overline{\mathbb{E}}_{\mathtt{%
Keep}}(k)\leq \mathbb{E}_{\mathtt{Keep}}(k).$

\begin{lem}
\label{lemlim} Let $r_0$ be such that $\mathbb{E}_{{\scriptsize \mathtt{Stop}%
}}(r)> \overline{\mathbb{E}}_{\mathtt{Keep}}(r)$ for every $r>r_0$. Then,
$\mathbb{E}_{{\scriptsize \mathtt{Stop}}}(r)> \mathbb{E}_{\mathtt{Keep}%
}(r)$ for every $r>r_0$.
\end{lem}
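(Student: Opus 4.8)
The plan is to prove the stronger statement that, on the range $r>r_0$, the two continuation values actually coincide: $\mathbb{E}_{\mathtt{Keep}}(r)=\overline{\mathbb{E}}_{\mathtt{Keep}}(r)$. This is the crux, because the inequality noted just before the lemma, $\overline{\mathbb{E}}_{\mathtt{Keep}}(r)\le \mathbb{E}_{\mathtt{Keep}}(r)$, points the wrong way: knowing $\mathbb{E}_{\mathtt{Stop}}(r)>\overline{\mathbb{E}}_{\mathtt{Keep}}(r)$ does not by itself place $\mathbb{E}_{\mathtt{Stop}}(r)$ above the larger quantity $\mathbb{E}_{\mathtt{Keep}}(r)$. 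Once the equality is established the conclusion is immediate, since for $r>r_0$ the hypothesis then gives $\mathbb{E}_{\mathtt{Stop}}(r)>\overline{\mathbb{E}}_{\mathtt{Keep}}(r)=\mathbb{E}_{\mathtt{Keep}}(r)$.

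First I would record the one-step recurrence satisfied by $\overline{\mathbb{E}}_{\mathtt{Keep}}$. Peeling the $i=k+1$ term off the defining sum and factoring $(1-p_{k+1})$ out of the remaining terms yields
\begin{equation*}
\overline{\mathbb{E}}_{\mathtt{Keep}}(k)=p_{k+1}\,\mathbb{E}_{{\scriptsize \mathtt{Stop}}}(k+1)+(1-p_{k+1})\,\overline{\mathbb{E}}_{\mathtt{Keep}}(k+1),\qquad \overline{\mathbb{E}}_{\mathtt{Keep}}(n)=0,
\end{equation*}
which is exactly the dynamic program defining $\mathbb{E}_{\mathtt{Keep}}$, but with the $\max$ replaced by its first argument.

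Next I would prove $\mathbb{E}_{\mathtt{Keep}}(r)=\overline{\mathbb{E}}_{\mathtt{Keep}}(r)$ for every $r>r_0$ by backward induction on $r$, starting at $r=n$ and descending to $r_0+1$. The base case $r=n$ holds because both sides vanish. For the inductive step, assume the equality at $r+1$ with $r>r_0$; then $r+1>r_0$, so the hypothesis of the lemma gives $\mathbb{E}_{{\scriptsize \mathtt{Stop}}}(r+1)>\overline{\mathbb{E}}_{\mathtt{Keep}}(r+1)=\mathbb{E}_{\mathtt{Keep}}(r+1)$. Hence the $\max$ in the recurrence for $\mathbb{E}_{\mathtt{Keep}}(r)$ is attained at $\mathbb{E}_{{\scriptsize \mathtt{Stop}}}(r+1)$, the two recurrences displayed above become identical, and therefore $\mathbb{E}_{\mathtt{Keep}}(r)=\overline{\mathbb{E}}_{\mathtt{Keep}}(r)$. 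Combining this equality with the hypothesis finishes the proof.

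The step I expect to matter most is getting the direction of the argument right: because $\overline{\mathbb{E}}_{\mathtt{Keep}}$ only underestimates $\mathbb{E}_{\mathtt{Keep}}$, the hypothesised inequality must first be upgraded to an equality of the two continuation values, and this is precisely what the backward induction supplies. The remaining bookkeeping — checking that $r+1>r_0$ at each stage so that the hypothesis is available — is routine once the induction is organised from $n$ downward.
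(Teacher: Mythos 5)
Your proof is correct and rests on the same key observation as the paper's: once stopping is optimal at every stage after $r$, the optimal continuation value $\mathbb{E}_{\mathtt{Keep}}(r)$ collapses to the myopic value $\overline{\mathbb{E}}_{\mathtt{Keep}}(r)$. The paper packages this as a contradiction argument (taking the maximum $r'$ of the set where the conclusion fails and asserting the collapse at $r'$), whereas you run a direct backward induction through the two one-step recurrences; this is the same idea, with the merit that it makes explicit the final step the paper only asserts.
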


\begin{proof}
Given $r_{0}$, let us consider the set $S=\{r>r_{0}:\mathbb{E}_{{\scriptsize
\mathtt{Stop}}}(r)\leq\mathbb{E}_{\mathtt{Keep}}(r)\}$. It is necessary to
prove that $S=\emptyset $. Let us assume that $S$ is nonempty and let $%
r^{\prime }$ be its maximum. This means that $\mathbb{E}_{\mathtt{Stop}%
}(r^{\prime })\leq\mathbb{E}_{{\scriptsize {\mathtt{Keep}}}}(r^{\prime })$ and $%
\mathbb{E}_{\mathtt{Stop}}(r^{\prime })>\overline{\mathbb{E}}_{%
{\scriptsize {\mathtt{Keep}}}}(r^{\prime })$, while $\mathbb{E}_{\mathtt{Stop%
}}(\mathbf{r}^{\prime })> \mathbb{E}_{\mathtt{Keep}}(\mathbf{r}^{\prime })
$ for all $\mathbf{r}^{\prime }>r^{\prime }$; but this is a contradiction.
This is because if $\mathbb{E}_{\mathtt{Stop}}(\mathbf{r}^{\prime })>
\mathbb{E}_{\mathtt{Keep}}(\mathbf{r}^{\prime })$ for all $\mathbf{r}%
^{\prime }>r^{\prime }$, then $\mathbb{E}_{\mathtt{Keep}}(r^{\prime })=%
\overline{\mathbb{E}}_{\mathtt{Keep}}(r^{\prime })$.
\end{proof}

Using this lemma, it is possible to reformulate Proposition \ref{guay} and
Proposition \ref{guay1} in terms of $\overline{\mathbb{E}}_{\mathtt{Keep}}(r)
$, which we can know explicitly.

\begin{prop}
\label{dos} If for all $0<r<n$ the following is true
\begin{equation*}
\mathbb{E}_{{\scriptsize {\mathtt{Stop}}}}(r)> \overline{\mathbb{E}}_{%
\mathtt{Keep}}(r)\Rightarrow \mathbb{E}_{{\scriptsize {\mathtt{Stop}}}%
}(r+1)>\overline{\mathbb{E}}_{\mathtt{Keep}}(r+1)
\end{equation*}%
then the problem is monotone.
\end{prop}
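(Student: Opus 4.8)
The plan is to reduce the hypothesis, which is phrased in terms of the explicitly computable quantity $\overline{\mathbb{E}}_{\mathtt{Keep}}$, to the monotonicity criterion of Proposition \ref{guay1}, which is phrased in terms of the recursively defined $\mathbb{E}_{\mathtt{Keep}}$. The bridge between the two will be Lemma \ref{lemlim} together with the elementary inequality $\overline{\mathbb{E}}_{\mathtt{Keep}}(r)\leq\mathbb{E}_{\mathtt{Keep}}(r)$ noted just before the lemma.

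First I would locate a threshold. The hypothesis says precisely that the set of indices where $\mathbb{E}_{{\scriptsize\mathtt{Stop}}}(r)>\overline{\mathbb{E}}_{\mathtt{Keep}}(r)$ is upward closed: once the strict inequality holds at some $r$, it holds at $r+1$ and hence at every larger index. Accordingly I would put
\begin{equation*}
r_{0}:=\max\{r:\mathbb{E}_{{\scriptsize\mathtt{Stop}}}(r)\leq\overline{\mathbb{E}}_{\mathtt{Keep}}(r)\}
\end{equation*}
(and $r_{0}:=0$ if this set is empty). Since $\mathbb{E}_{{\scriptsize\mathtt{Stop}}}(n)=w_{n}>0=\overline{\mathbb{E}}_{\mathtt{Keep}}(n)$, the index $n$ never lies in the failing set, so $r_{0}\leq n-1$ and the region $r>r_{0}$ is nonempty. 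By the very definition of $r_{0}$ we have $\mathbb{E}_{{\scriptsize\mathtt{Stop}}}(r)>\overline{\mathbb{E}}_{\mathtt{Keep}}(r)$ for every $r>r_{0}$.

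Next I would split the index range at $r_{0}$ and determine the sign of $\mathbb{E}_{{\scriptsize\mathtt{Stop}}}(r)-\mathbb{E}_{\mathtt{Keep}}(r)$ on each side. For $r>r_{0}$ the inequality against $\overline{\mathbb{E}}_{\mathtt{Keep}}$ does not by itself control $\mathbb{E}_{\mathtt{Keep}}$, because $\overline{\mathbb{E}}_{\mathtt{Keep}}\leq\mathbb{E}_{\mathtt{Keep}}$ points the wrong way; this is exactly where Lemma \ref{lemlim} is applied, yielding $\mathbb{E}_{{\scriptsize\mathtt{Stop}}}(r)>\mathbb{E}_{\mathtt{Keep}}(r)$ for all $r>r_{0}$. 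For $r\leq r_{0}$ I would use the hypothesis contrapositively: a strict inequality $\mathbb{E}_{{\scriptsize\mathtt{Stop}}}(r)>\overline{\mathbb{E}}_{\mathtt{Keep}}(r)$ at some $r\leq r_{0}$ would propagate upward through the admissible range $0<r<n$ all the way to $r_{0}$, contradicting $r_{0}$ lying in the failing set. Hence $\mathbb{E}_{{\scriptsize\mathtt{Stop}}}(r)\leq\overline{\mathbb{E}}_{\mathtt{Keep}}(r)$ there, and now $\overline{\mathbb{E}}_{\mathtt{Keep}}\leq\mathbb{E}_{\mathtt{Keep}}$ points the right way, giving $\mathbb{E}_{{\scriptsize\mathtt{Stop}}}(r)\leq\mathbb{E}_{\mathtt{Keep}}(r)$ directly.

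Combining the two cases, the difference $\mathbb{E}_{{\scriptsize\mathtt{Stop}}}(r)-\mathbb{E}_{\mathtt{Keep}}(r)$ is non-positive for $r\leq r_{0}$ and strictly positive for $r>r_{0}$, so it changes sign at most once; Proposition \ref{guay1} then delivers monotonicity. I expect the main obstacle to be the asymmetry of the inequality $\overline{\mathbb{E}}_{\mathtt{Keep}}\leq\mathbb{E}_{\mathtt{Keep}}$: it is useful for free on the region $r\leq r_{0}$, but above $r_{0}$ it runs in the wrong direction, and only Lemma \ref{lemlim}---which exploits that $\mathbb{E}_{\mathtt{Keep}}$ collapses to $\overline{\mathbb{E}}_{\mathtt{Keep}}$ once stopping is optimal at every later index---rescues that half of the argument. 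Beyond this, the care needed is purely bookkeeping: checking that each iterated use of the hypothesis stays inside $0<r<n$ and handling the degenerate case $r_{0}=0$, where the region $r\leq r_{0}$ is vacuous and the conclusion is immediate.
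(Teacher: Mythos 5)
Your proposal is correct and follows essentially the same route as the paper: both use the hypothesis to propagate $\mathbb{E}_{{\scriptsize {\mathtt{Stop}}}}(r)>\overline{\mathbb{E}}_{\mathtt{Keep}}(r)$ upward from an anchor index and then invoke Lemma \ref{lemlim} to upgrade this to $\mathbb{E}_{{\scriptsize {\mathtt{Stop}}}}(r)>\mathbb{E}_{\mathtt{Keep}}(r)$, with the inequality $\overline{\mathbb{E}}_{\mathtt{Keep}}\leq \mathbb{E}_{\mathtt{Keep}}$ supplying the other direction. The only cosmetic difference is that you anchor at the maximum of the failing set and check both sides explicitly, whereas the paper anchors at the minimum of the stopping set; your version is if anything slightly more complete in handling the region below the threshold and the degenerate case.
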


\begin{proof}
Let $r_{0}$ be the minimum of the stopping set. $\mathbb{E}_{{\scriptsize {%
\mathtt{Stop}}}}(r_{0})> \overline{\mathbb{E}}_{\mathtt{Keep}}(r_{0})$
and using the hypothesis inductively, we have that $\mathbb{E}_{{\scriptsize
{\mathtt{Stop}}}}(r)> \overline{\mathbb{E}}_{\mathtt{Keep}}(r)$ for all $%
r\geq r_{0}$. We thus find ourselves within the conditions of Lemma \ref%
{lemlim} and hence $\mathbb{E}_{{\scriptsize {\mathtt{Stop}}}}(r)>
\mathbb{E}_{\mathtt{Keep}}(r)$ for all $r\geq r_{0}$.

\begin{prop}
\label{dos11} The problem is monotone if and only if for all $0<k\leq n$
\begin{equation*}
\mathbb{E}_{{\scriptsize {\mathtt{Stop}}}}(k)-\overline{\mathbb{E}}_{%
{\scriptsize {\mathtt{Keep}}}}(k)
\end{equation*}%
change sign at the most once.
\end{prop}
\end{proof}

\begin{prop}
If the problem is monotone and $\mathbf{k}$ is the optimal threshold, then

\begin{equation*}
\mathbb{E}_{{\scriptsize {\mathtt{Stop}}}}(r)> \mathbb{E}_{\mathtt{Keep}%
}(r) \Longleftrightarrow \mathbb{E}_{{\scriptsize {\mathtt{Stop}}}}(r)>
\overline{\mathbb{E}}_{\mathtt{Keep}}(r)
\end{equation*}
\begin{equation*}
\mathbb{E}_{\mathtt{Keep}}(r)=
\begin{cases}
\overline{\mathbb{E}}_{\mathtt{Keep}}(r), & \text{if $r \geq \mathbf{k}$ };
\\
\overline{\mathbb{E}}_{\mathtt{Keep}}(\mathbf{k}-1), & \text{ if $r<\mathbf{k%
}$ }%
\end{cases}
\end{equation*}
\end{prop}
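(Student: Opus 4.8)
The plan is to derive both assertions from a single observation already contained in the proof of Lemma \ref{lemlim}: whenever it is optimal to stop at every event strictly after $r$, the dynamic program from $r$ onward reduces to ``stop at the next success.'' Concretely, I would first isolate the implication
\begin{equation*}
\mathbb{E}_{\mathtt{Stop}}(j)>\mathbb{E}_{\mathtt{Keep}}(j)\ \text{for all } j>r\ \Longrightarrow\ \mathbb{E}_{\mathtt{Keep}}(r)=\overline{\mathbb{E}}_{\mathtt{Keep}}(r),
\end{equation*}
which is exactly the step carried out at the end of the proof of Lemma \ref{lemlim}. It follows by a one-line backward induction: under the hypothesis the maximum in the recurrence for $\mathbb{E}_{\mathtt{Keep}}$ is always attained at $\mathbb{E}_{\mathtt{Stop}}$, so that recurrence becomes the relation $\overline{\mathbb{E}}_{\mathtt{Keep}}(k)=p_{k+1}\mathbb{E}_{\mathtt{Stop}}(k+1)+(1-p_{k+1})\overline{\mathbb{E}}_{\mathtt{Keep}}(k+1)$ which follows directly from the definition of $\overline{\mathbb{E}}_{\mathtt{Keep}}$. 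Since the problem is monotone, the stopping set is $\{j\geq\mathbf{k}\}$, so $\mathbb{E}_{\mathtt{Stop}}(j)>\mathbb{E}_{\mathtt{Keep}}(j)$ holds precisely for $j\geq\mathbf{k}$.

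For the equivalence I would treat the two directions separately. The implication $\mathbb{E}_{\mathtt{Stop}}(r)>\mathbb{E}_{\mathtt{Keep}}(r)\Rightarrow\mathbb{E}_{\mathtt{Stop}}(r)>\overline{\mathbb{E}}_{\mathtt{Keep}}(r)$ is immediate from the inequality $\overline{\mathbb{E}}_{\mathtt{Keep}}(r)\leq\mathbb{E}_{\mathtt{Keep}}(r)$ noted before the lemma. For the converse, suppose $\mathbb{E}_{\mathtt{Stop}}(r)>\overline{\mathbb{E}}_{\mathtt{Keep}}(r)$. By Proposition \ref{dos11} the difference $\mathbb{E}_{\mathtt{Stop}}(k)-\overline{\mathbb{E}}_{\mathtt{Keep}}(k)$ changes sign at most once, and since $\mathbb{E}_{\mathtt{Stop}}(n)=w_n>0=\overline{\mathbb{E}}_{\mathtt{Keep}}(n)$ it is positive at $n$; being positive at the top index and changing sign at most once, it is positive on an upper set, so $\mathbb{E}_{\mathtt{Stop}}(j)>\overline{\mathbb{E}}_{\mathtt{Keep}}(j)$ for every $j\geq r$, i.e. for every $j>r-1$. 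Lemma \ref{lemlim}, applied with $r_0=r-1$, then gives $\mathbb{E}_{\mathtt{Stop}}(j)>\mathbb{E}_{\mathtt{Keep}}(j)$ for all such $j$, in particular at $j=r$. This proves the equivalence and, together with monotonicity, identifies $\{r:\mathbb{E}_{\mathtt{Stop}}(r)>\overline{\mathbb{E}}_{\mathtt{Keep}}(r)\}$ with $\{r\geq\mathbf{k}\}$.

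For the closed form of $\mathbb{E}_{\mathtt{Keep}}$ I would split at the threshold. When $r\geq\mathbf{k}-1$, every $j>r$ satisfies $j\geq\mathbf{k}$ and hence lies in the stopping set, so the displayed implication of the first paragraph gives $\mathbb{E}_{\mathtt{Keep}}(r)=\overline{\mathbb{E}}_{\mathtt{Keep}}(r)$; this settles the case $r\geq\mathbf{k}$ and also supplies the base value $\mathbb{E}_{\mathtt{Keep}}(\mathbf{k}-1)=\overline{\mathbb{E}}_{\mathtt{Keep}}(\mathbf{k}-1)$. For $r<\mathbf{k}$ I would run a downward induction starting from $\mathbf{k}-1$: if $r+1\leq\mathbf{k}-1$ then $r+1$ is not in the stopping set, so $\mathbb{E}_{\mathtt{Stop}}(r+1)\leq\mathbb{E}_{\mathtt{Keep}}(r+1)$, the maximum in the recurrence equals $\mathbb{E}_{\mathtt{Keep}}(r+1)$, and since the two probabilities in that recurrence sum to one it collapses to $\mathbb{E}_{\mathtt{Keep}}(r)=\mathbb{E}_{\mathtt{Keep}}(r+1)$. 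Iterating down to $\mathbf{k}-1$ yields $\mathbb{E}_{\mathtt{Keep}}(r)=\mathbb{E}_{\mathtt{Keep}}(\mathbf{k}-1)=\overline{\mathbb{E}}_{\mathtt{Keep}}(\mathbf{k}-1)$ for all $r<\mathbf{k}$, which is the second branch.

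The routine content is the two backward inductions on the dynamic-programming recurrence; the one genuinely delicate step is the converse of the equivalence, where one must upgrade a statement about $\overline{\mathbb{E}}_{\mathtt{Keep}}$, which is known explicitly, to one about $\mathbb{E}_{\mathtt{Keep}}$, which is not. This is exactly where monotonicity is indispensable: it is used through Proposition \ref{dos11} to guarantee that the explicit difference $\mathbb{E}_{\mathtt{Stop}}-\overline{\mathbb{E}}_{\mathtt{Keep}}$ is positive on an upper set, so that Lemma \ref{lemlim} can be invoked to pass to $\mathbb{E}_{\mathtt{Keep}}$. I would also be careful with the boundary conventions, namely the case $r=n$, the empty sums and products in the definitions, and the double role of $r=\mathbf{k}-1$ as both the base of the induction and the seam between the two branches, so that the piecewise formula agrees with the recurrence everywhere.
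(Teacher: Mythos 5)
The paper states this proposition with no proof at all, so there is nothing to compare your route against; judged on its own, your argument for the piecewise formula for $\mathbb{E}_{\mathtt{Keep}}$ and for the direction $\mathbb{E}_{\mathtt{Stop}}(r)>\mathbb{E}_{\mathtt{Keep}}(r)\Rightarrow\mathbb{E}_{\mathtt{Stop}}(r)>\overline{\mathbb{E}}_{\mathtt{Keep}}(r)$ is correct (modulo reading the factor $(1-p_k)$ in the recurrence as the evidently intended $(1-p_{k+1})$, which you do implicitly when you say the two probabilities sum to one). The genuine gap is the converse implication. You rightly single it out as the delicate step, but you then discharge it entirely onto Proposition~\ref{dos11}, which the paper also states without proof; and the direction of Proposition~\ref{dos11} you invoke (monotone implies $\mathbb{E}_{\mathtt{Stop}}-\overline{\mathbb{E}}_{\mathtt{Keep}}$ changes sign at most once) is false, as is the biconditional you are trying to establish for all $r$. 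Take $n=3$, $p_1=p_2=p_3=\tfrac12$, $w_1=\tfrac95$, $w_2=\tfrac12$, $w_3=1$. Then $\mathbb{E}_{\mathtt{Stop}}(1)=\tfrac{9}{20}$, $\mathbb{E}_{\mathtt{Stop}}(2)=\tfrac14$, $\mathbb{E}_{\mathtt{Stop}}(3)=1$ and $\mathbb{E}_{\mathtt{Keep}}(1)=\mathbb{E}_{\mathtt{Keep}}(2)=\tfrac12$, so $\Upsilon_3=\{3\}$ and the problem is monotone with $\mathbf{k}=3$; yet $\overline{\mathbb{E}}_{\mathtt{Keep}}(1)=\tfrac12\cdot\tfrac14+\tfrac12\cdot\tfrac12\cdot 1=\tfrac38<\tfrac{9}{20}=\mathbb{E}_{\mathtt{Stop}}(1)$, so at $r=1$ the right-hand side of the equivalence holds while the left-hand side fails. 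Correspondingly the signs of $\mathbb{E}_{\mathtt{Stop}}(k)-\overline{\mathbb{E}}_{\mathtt{Keep}}(k)$ are $+,-,+$, i.e.\ two sign changes despite monotonicity, so no argument can repair the step that leans on Proposition~\ref{dos11}.

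What survives, and what your third paragraph already proves without any appeal to Proposition~\ref{dos11}, is the restriction of the equivalence to $r\geq\mathbf{k}-1$: for $r\geq\mathbf{k}$ both sides hold because $\overline{\mathbb{E}}_{\mathtt{Keep}}(r)\leq\mathbb{E}_{\mathtt{Keep}}(r)$, and at $r=\mathbf{k}-1$ both sides fail because your piecewise formula gives $\mathbb{E}_{\mathtt{Keep}}(\mathbf{k}-1)=\overline{\mathbb{E}}_{\mathtt{Keep}}(\mathbf{k}-1)\geq\mathbb{E}_{\mathtt{Stop}}(\mathbf{k}-1)$. This weaker statement is exactly what the proof of the main theorem needs, since it identifies $\mathbf{k}$ as $\max\{k:\mathbb{E}_{\mathtt{Stop}}(k-1)\leq\overline{\mathbb{E}}_{\mathtt{Keep}}(k-1)\}$ (the maximum is unaffected by spurious indices below $\mathbf{k}-1$ where $\mathbb{E}_{\mathtt{Stop}}>\overline{\mathbb{E}}_{\mathtt{Keep}}$). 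I would therefore keep your first and third paragraphs, drop the appeal to Proposition~\ref{dos11}, and state the equivalence only for $r\geq\mathbf{k}-1$; as written, the unrestricted converse cannot be proved because it is not true.
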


\section{The extended Odds-Theorem}

\begin{teor}
Let $I_{1},I_{2},...,I_{n}$ be $n$ independent Bernoulli random variables
with parameter $p_{i}$. Let $w_{i}$ be real positive numbers that represent
the payments a player receives for indicating the last $"1"$ in the variable
$I_{i}$. We define the index (with auxiliary $w_{0}:=0$)
\begin{equation*}
\mathbf{s}=\max \left\{ k:\sum_{j=k}^{n}\frac{w_{j}\cdot p_{j}}{1-p_{j}}\geq
w_{k-1}\right\}
\end{equation*}%
If the problem is monotone, then $\mathbf{s}$ is the optimal threshold. That
is, to maximize the expected profit, it is optimal to stop on the first $"1"$
we encounter among the variables $I_{\mathbf{s}},...,I_{n}$. Furthermore,
with this strategy, the expected profit is:

\begin{equation*}
\mathbb{E}=
\begin{cases}
\left( \prod_{j=\mathbf{s} }^{n}(1-p_j) \right)\sum_{i=\mathbf{s} }^{n}
\frac{w_i\cdot p_i}{1-p_i} , & \text{if $p_\mathbf{s}<1$ }; \\
w_\mathbf{s} \cdot\prod_{j=\mathbf{s+1} }^{n}(1-p_j), & \text{ if $p_\mathbf{%
s}=1$ }%
\end{cases}
\end{equation*}
\end{teor}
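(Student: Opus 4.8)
The plan is to reduce everything to an explicit computation with $\overline{\mathbb{E}}_{\mathtt{Keep}}$, exploiting the fact that under monotonicity the dynamic-programming value $\mathbb{E}_{\mathtt{Keep}}$ coincides with the easily-computed $\overline{\mathbb{E}}_{\mathtt{Keep}}$. First I would telescope the nested products in the definition of $\overline{\mathbb{E}}_{\mathtt{Keep}}$ to obtain, whenever $p_j<1$ for all $j>k$, the closed form
\begin{equation*}
\overline{\mathbb{E}}_{\mathtt{Keep}}(k)=\left(\prod_{j=k+1}^{n}(1-p_j)\right)\sum_{i=k+1}^{n}\frac{w_i\,p_i}{1-p_i}.
\end{equation*}
Writing $R_k:=\sum_{j=k}^{n}\frac{w_j p_j}{1-p_j}$ and dividing the comparison $\mathbb{E}_{\mathtt{Stop}}(r)>\overline{\mathbb{E}}_{\mathtt{Keep}}(r)$ by the positive factor $\prod_{j=r+1}^{n}(1-p_j)$, it collapses to the transparent condition $w_r>R_{r+1}$. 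This is the crucial simplification: it trades the implicitly-defined dynamic program for an explicit inequality in the data.

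By the final Proposition of the previous section, monotonicity gives $r\in\Upsilon_n\Leftrightarrow\mathbb{E}_{\mathtt{Stop}}(r)>\overline{\mathbb{E}}_{\mathtt{Keep}}(r)$, hence $r\in\Upsilon_n\Leftrightarrow w_r>R_{r+1}$, and also the optimal expected profit $\mathbb{E}_{\mathtt{Keep}}(0)$ equals $\overline{\mathbb{E}}_{\mathtt{Keep}}(\mathbf{k}-1)$, where $\mathbf{k}=\min\Upsilon_n$ is the threshold. It then remains to verify the index identity $\mathbf{k}=\mathbf{s}$. Since the problem is monotone, $\Upsilon_n=\{r:r\geq\mathbf{k}\}$; substituting $k=r+1$, the condition $R_k\geq w_{k-1}$ defining $\mathbf{s}$ is exactly the negation of $r\in\Upsilon_n$. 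A one-line bookkeeping check (testing $k=\mathbf{k}$ and $k=\mathbf{k}+1$ and using monotonicity for the rest) then shows $\{k:R_k\geq w_{k-1}\}=\{1,\dots,\mathbf{k}\}$, so its maximum is $\mathbf{k}$, i.e. $\mathbf{s}=\mathbf{k}$. Substituting $\mathbf{k}=\mathbf{s}$ into the closed form of $\overline{\mathbb{E}}_{\mathtt{Keep}}(\mathbf{s}-1)$ yields the first branch of the profit formula.

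The hard part is the degenerate case $p_{\mathbf{s}}=1$, where the closed form breaks down because a factor $1-p_i$ sits in a denominator. I would set $M:=\max\{j:p_j=1\}$ and first argue that one never stops before $M$: for $r<M$ the product $\prod_{i=r+1}^{n}(1-p_i)$ contains the vanishing factor $1-p_M$, so $\mathbb{E}_{\mathtt{Stop}}(r)=0\leq\mathbb{E}_{\mathtt{Keep}}(r)$ and thus $r\notin\Upsilon_n$, giving $\mathbf{k}\geq M$. Crucially, for every $r\geq M$ the only indices entering the relevant products exceed $M$ and hence have $p<1$, so both the closed form and the reduction to $w_r>R_{r+1}$ remain valid there; the index-matching argument therefore goes through unchanged and shows that $p_{\mathbf{s}}=1$ forces $\mathbf{s}=\mathbf{k}=M$. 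Finally I evaluate $\overline{\mathbb{E}}_{\mathtt{Keep}}(M-1)$ by hand: each summand with $i>M$ carries the factor $1-p_M=0$ and vanishes, leaving only the $i=M$ term $w_M\prod_{j=M+1}^{n}(1-p_j)$, which is precisely the second branch. The single point requiring care is that indices $j<M$ with $p_j=1$ never interfere, since the optimal rule does not stop before $M$ in any case.
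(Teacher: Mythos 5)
Your proposal is correct and follows essentially the same route as the paper: both replace $\mathbb{E}_{\mathtt{Keep}}$ by the explicit $\overline{\mathbb{E}}_{\mathtt{Keep}}$ via the final proposition of Section 2, cancel the common factor $\prod_j(1-p_j)$ to collapse the stopping comparison to the odds inequality $w_{k-1}\leq \sum_{i=k}^{n} w_i p_i/(1-p_i)$, and read off the expected profit as $\overline{\mathbb{E}}_{\mathtt{Keep}}(\mathbf{s}-1)$. Your handling of the degenerate case $p_{\mathbf{s}}=1$ (via $M=\max\{j:p_j=1\}$) is more careful than the paper's one-line remark that ``the proof is the same,'' but it is an elaboration of the same argument rather than a different approach.
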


\begin{proof}
Recall that the optimal threshold is
\begin{equation*}
\mathbf{k} = 1+\max \{k: \mathbb{E}_{\mathtt{Stop}}(k)\leq\mathbb{E}_{\mathtt{%
Keep}}(k)\}=\max \{k: \mathbb{E}_{\mathtt{Stop}}(k-1)\leq\mathbb{E}_{\mathtt{%
Keep}}(k-1)\}.
\end{equation*}

We shall first assume that $p_{\mathbf{k}}<1$ and hence $p_{k}<1$ for all $k>%
\mathbf{k}$. Bear in mind that if $p_{k}=1$ for some $k>\mathbf{k}$, then $%
\mathbb{E}_{\mathtt{Stop}}(\mathbf{k})=0$, which would be a contradiction.
We shall first prove that $\mathbf{s}=\mathbf{k}$.

\begin{equation*}
\mathbf{k}=\max \{k:\mathbb{E}_{\mathtt{Stop}}(k-1)\leq \sum_{i=k}^{n}\left(
\prod_{j=k}^{i-1}(1-p_{j})\right) \cdot p_{i}\cdot \mathbb{E}_{\mathtt{Stop}%
}(i)\}
\end{equation*}%
as
\begin{equation*}
\mathbb{E}_{\mathtt{Stop}}(i)=w_{i}\prod_{t=i+1}^{n}(1-p_{t})
\end{equation*}

\begin{equation*}
\mathbf{k} =\max \left\{k:w_{k-1} \prod^n_{t=k} (1-p_t) \leq \sum_{i=k }^{n}
\left( \prod_{j=k }^{i-1}(1-p_j) \right)\cdot p_i \cdot w_i \prod^n_{t=i+1}
(1-p_t) \right\}
\end{equation*}

\begin{equation*}
\mathbf{k} =\max \left\{k:w_{k-1} \prod^n_{t=k} (1-p_t) \leq \sum_{i=k }^{n}
\left( \prod_{j=k }^{n}(1-p_j) \right)\cdot \frac{p_i}{1-p_i} \cdot w_i
\right\}
\end{equation*}

\begin{equation*}
\mathbf{k} =\max \left\{k:w_{k-1} \leq \sum_{i=k }^{n} \frac{p_i}{1-p_i}
\cdot w_i \right\} =\mathbf{s}
\end{equation*}

As to the value of the expected profit, which is in fact $\overline{\mathbb{E%
}}_{\mathtt{Keep}}(\mathbf{s}-1)$, we have

\begin{equation*}
\overline{\mathbb{E}}_{\mathtt{Keep}}(\mathbf{s}-1) = \sum_{i=\mathbf{s}%
}^{n} \left(\left( \prod_{j=\mathbf{s}}^{i-1}(1-p_j) \right) p_i \cdot
\mathbb{E}_{{\scriptsize {\mathtt{Stop}}}}(i) \right)
\end{equation*}

\begin{equation*}
\overline{\mathbb{E}}_{\mathtt{Keep}}(\mathbf{s}-1)= \sum_{i=\mathbf{s}%
}^{n} \left(\left( \prod_{j=\mathbf{s}}^{i-1}(1-p_j) \right) p_i \cdot w_i
\prod^n_{t=i+1} (1-p_t) \right)
\end{equation*}

and,   carrying out the same operations  as before, we have that

\begin{equation*}
\mathbb{E}=\overline{\mathbb{E}}_{\mathtt{Keep}}(\mathbf{s}-1)=\left(
\prod_{j=\mathbf{s} }^{n}(1-p_j) \right)\sum_{i=\mathbf{s} }^{n} \frac{%
w_i\cdot p_i}{1-p_i}
\end{equation*}

If $p_{\mathbf{k}}=1$, the proof that $\mathbf{s}=\mathbf{k}$ is the same.
As for the expected profit, bearing in mind that we shall stop at the $%
\mathbf{s}$-th variable with probability $1$, then

\begin{equation*}
\mathbb{E}= \overline{\mathbb{E}}_{\mathtt{Keep}}(\mathbf{s}-1)=\mathbb{E}_{%
\mathtt{Stop}}(\mathbf{s})= w_\mathbf{s} \cdot \left( \prod_{j=\mathbf{k}+1
}^{n}(1-p_j) \right).
\end{equation*}
\end{proof}
 
The previous proposition has as its particular case the famous Odds-Theorem
(Theorem \ref{odds}) when considering $w_{i}=1$. The proof provided
here is even more elementary and simpler than that provided by Bruss. The
preparatory results cannot be said to be absolutely original in substance,
but they are so in terms of their elucidation and hence the paper may be
said to be fully self-contained.

\section{Some application examples}

\subsection{The Best-choice Duration Problem.}

Let us consider the secretary problem with a payment $w_{k}=(n-k+1)$ for
selecting the best secretary in the $k$-th interview. Within the context of
this paper, we have $n$ independent Bernoulli random variables with
parameters $p_{k}=1/k$ and payoffs $w_{k}$. It is not difficult (though not
straightforward) to see that the problem is monotone. In this case, its
proof requires using Proposition \ref{dos11}.

\begin{equation*}
\mathbf{s}_n=\max \left\{k: \sum_{j=k}^n \frac{w_j \cdot p_j}{1-p_j}\geq
w_{k-1}\right\}=\max \left\{k: \sum_{j=k}^n \frac{(n-j+1) \cdot \frac{1}{j}}{%
1-\frac{1}{j}}\geq n-j+2\right\}
\end{equation*}

\begin{equation*}
\mathbf{s}_n=\max \left\{k:\frac{2n-2k+3}{n}\leq \sum_{k-2}^{n-1} \frac{1}{i}%
\right\}
\end{equation*}

from which it is   can easily be seen  that $\mathbf{s}_{n}/n$ tends
to \emph{rumour's constant}, which is the solution to the equation $%
2-2\,x+\log (x)=0$
\begin{equation*}
\vartheta :=-\frac{1}{2}W(-2e^{-2})=0.203187869....\text{ (A106533 in OEIS)}
\end{equation*}

and the asymptotic expected profit is $\mathbb{E}_{n}\sim n\cdot \vartheta
(1-\vartheta )=n\cdot 0.161902...$

\begin{rem}
Ferguson et al. in \cite{dura}, within the context of the Best-choice
Duration Problem, consider a payoff of $(n-k+1)/n$ and find the above
asymptotic values erroneously approximated as 0.20388... and 0.1618....
\end{rem}

\begin{rem}
If we consider $w_{k}:=1-k/n$ and $p_{k}=1/k$, the problem is equivalent to
the secretary problem considering a cost of $1/n$ for each interview and a
payment of $1$ for success. The asymptotic values are the same as in the
example and can be calculated in another way in \cite{bayon}.
\end{rem}

\subsection{The Best-choice and Minimal Duration Problem}

To the best of our knowledge, there is no study in the literature of this
problem, which consists in considering in the secretary problem a payment
for success equal to the number of interviews carried out. In the terms of
this paper, we shall have $n$ independent Bernoulli random variables with
parameters $p_{k}=1/k$ and payoffs $w_{k}=k.$ In this case, it is clear that
the problem is monotone (optimal threshold strategy) as $w_{k}$ is
increasing.

\begin{equation*}
\mathbf{s}_{n}=\max \left\{ k:\sum_{j=k}^{n}\frac{w_{j}\cdot p_{j}}{1-p_{j}}%
\geq w_{k-1}\right\} =\max \left\{ k:\sum_{j=k}^{n}\frac{j\cdot \frac{1}{j}}{%
1-\frac{1}{j}}\geq k-1\right\}
\end{equation*}%
Denoting by $\mathbf{H}(k)$ the $k$-th harmonic number, we have

\begin{equation*}
\sum_{i=k}^n \frac{i\frac{1}{i}}{1-\frac{1}{i}}= 1 - k +n + \,  \left(
\mathbf{H }(n-1) -  \mathbf{H} (k-2) \right)=1 -k + n + \sum_{k-2}^{n-1}
\frac{1}{i}
\end{equation*}

\begin{equation*}
\mathbf{s}_n= \max \left\{k: 1 - k + n + \sum_{k-2}^{n-1} \frac{1}{i}\geq
k-1\right\}= \max \left\{k: \sum_{k-2}^{n-1} \frac{1}{i}\geq 2k- n-2\right\}
\end{equation*}

from which it can easily be seen  that $\mathbf{s}_{n}/n$ tends to $%
1/2$ and the asymptotic expected profit is
\begin{equation*}
\mathbb{E}_{n}\sim \frac{n}{4}
\end{equation*}

\subsection{n Bernoulli variables with the same parameter $p_k=1/n$ and $w_k
= k$}

Let us consider $n$ independent Bernoulli random variables with parameters $%
p_{k}=1/k$ and payoffs $w_{k}=k$. The problem is monotone, as $w_{k}$ is
increasing.

\begin{equation*}
\mathbf{s}_n=\max \left\{k: \sum_{j=k}^n \frac{\left( 1 - k + n \right) \,
\left( k + n \right) }{2\, \left( -1 + n \right) }\geq k-1\right\}
\end{equation*}

\begin{equation*}
\mathbf{s}_n=\left\lfloor\frac{3 - 2\,n + \sqrt{1 + 8\,n^2}} {2}%
\right\rfloor \approx \frac{3}{2} + \left( -1 +  \sqrt{2} \right) \,n
\end{equation*}

\begin{equation*}
\mathbb{E}_n \sim n \left( -1 + \sqrt{2} \right) \,  e^{-2 + \sqrt{2}}=n
\cdot0.230579...
\end{equation*}

\subsection{n Bernoulli variables with the same parameter $p_k=p$ and $w_k
=n- k+1$}

Let us consider $n$ independent Bernoulli random variables with parameters $%
p_{k}=p$ and payoffs $w_{k}=n-k+1$. In this case, the problem is monotone as
\begin{equation*}
\mathbb{E}_{\mathtt{Stop}}(k)=\left( 1-k+n\right) \,{\left( 1-p\right) }%
^{-k+n}
\end{equation*}

\begin{equation*}
\overline{\mathbb{E}}_{\mathtt{Keep}}(k)=\frac{\left( -1+k-n\right) \,\left(
k-n\right) \,{\left( 1-p\right) }^{-1-k+n}\,p}{2}
\end{equation*}%
and $\mathbb{E}_{\mathtt{Stop}}(k)-\overline{\mathbb{E}}_{\mathtt{Keep}}(k)$
change sign at the most once.

Making

\begin{equation*}
\Omega:=\left\{k: \sum_{j=k}^n \frac{-\left( \left( -2 + k - n \right) \,
\left( -1 + k - n \right) \,p \right) }{2\, \left( -1 + p \right) }\geq
n-k+2\right\}
\end{equation*}

\begin{equation*}
\mathbf{s}_n=
\begin{cases}
\max \Omega , & \text{if $\Omega \neq\emptyset$ }; \\
1, & \text{ if $\Omega = \emptyset$ }%
\end{cases}
\end{equation*}

\begin{equation*}
\mathbf{s}_n=
\begin{cases}
\left\lfloor3 + n - \frac{2}{p}\right\rfloor , & \text{if $n > \frac{%
2\,\left( 1 - p \right) } {p}$ }; \\
1, & \text{ if $n \leq \frac{2\,\left( 1 - p \right) } {p}$ }%
\end{cases}
\end{equation*}

\begin{equation*}
\mathbb{E}_n=
\begin{cases}
\frac{{\left( 1 - p \right) }^ {-3 + \lfloor 2/p \rfloor}\,p\, \left( -2 +
\lfloor 2/p \rfloor \right) \, \left( -1 + \lfloor 2/p \rfloor \right) }{2},
& \text{if $n > \frac{2\,\left( 1 - p \right) } {p}$ }; \\
\frac{n\,\left( 1 + n \right) \, {\left( 1 - p \right) }^ {-1 + n}\,p}{2}, &
\text{ if $n \leq \frac{2\,\left( 1 - p \right) } {p}$ }%
\end{cases}
\end{equation*}

As expected, the optimal threshold is less than $n-\lceil 1/p\rceil +2$,
which is the   value  we obtain when considering the
Last-Success-Problem with parameters $p_{i}=p$ (see \cite{yo} and \cite{mal}%
).


\begin{thebibliography}{99}
\bibitem{bayon} Bayón, L., Fortuny Ayuso, P., Grau, J. M., Oller-Marcén, A.
M., Ruiz, M. M. (2018) The Best-or-Worst and the Postdoc problems.  \emph{J.
Comb. Optim.} 35, no. 3, 703--723.

\bibitem{BR1} Bruss, F.T. (2000) Sum the odds to one and stop. \emph{Ann.
Probab.} 28, no. 3, 1384--1391.

\bibitem{mono} Chow, Y.S, Robbins H., Siegmund, D. (1971) The Theory of Optimal Stopping. Houghton Mifflin, Boston.


\bibitem{fergu} Ferguson, T. (2011) The Sum-the-Odds Theorem with
Application to a Stopping Game of Sakaguchi VOL 44, NO 1 , 45-61.
MATHEMATICA APPLICANDA

\bibitem{dura} Ferguson, T. S., Hardwick, J. P. and Tamaki, M. (1992).
Maximizing the duration of owning a relatively best object. In Strategies
for Sequential Search and Selection in Real Time (Contemp. Math. 125),
American Mathematical Society, Providence, RI, pp. 37-57.

\bibitem{yo} Grau, J.M. (2018) A note on last-success-problems. arXiv:1811.09804v2

\bibitem{1992} Hill T. P. and Krengel, U. (1992). A prophet inequality
related to the secretary problem.\emph{\ Contemp. Math}. 125 209--215.

\bibitem{2000} Hsiau, S. R. and Yang, J. R. (2000). A natural variation of
the standard secretary problem. \emph{Statist. Sinica}. 10. 639-646


\bibitem{mal} Kohn, W. (2014). Last Success Problem: Decision Rule and
Application. Available at SSRN: https://ssrn.com/abstract=2441250 or
http://dx.doi.org/10.2139/ssrn.2441250

\end{thebibliography}
\end{document}